\numberwithin{equation}{section}
\numberwithin{figure}{section}
\theoremstyle{plain}
\newtheorem{thm}{Theorem}
  \theoremstyle{plain}
  \newtheorem{prop}[thm]{Proposition}
  \theoremstyle{plain}
  \newtheorem{cor}[thm]{Corollary}
  \newcounter{casectr}
  \newenvironment{caseenv}
  {\begin{list}{{\itshape\ Case} \arabic{casectr}.}{%
   \setlength{\leftmargin}{\labelwidth}
   \addtolength{\leftmargin}{\parskip}
   \setlength{\itemindent}{\listparindent}
   \setlength{\itemsep}{\medskipamount}
   \setlength{\topsep}{\itemsep}}
   \setcounter{casectr}{0}
   \usecounter{casectr}}
  {\end{list}}
  \theoremstyle{plain}
  \newtheorem*{thm*}{Theorem}
\begin{document}

\title{On the topology of invariant subspaces of a shift of higher multiplicity}

\author{Giorgi Shonia}
\begin{abstract}
Following Beurling's theorem and a study of the topology of invariant
subspaces by R. Douglas and C. Pearcy \cite{DouglasPearcyTopoInvSubspaces}
description of path connected components of invariant subspace lattice
for shift of multiplicity one has been given by R.Yang \cite{Yang1997CrossSec}.
This paper generalizes result to arbitrary finite multiplicity. We
show that there exists one to one correspondence between the invariant
subspace lattice of shift of arbitrary finite multiplicity and the
space of inner functions. 
\end{abstract}
\maketitle
\newpage{}

\tableofcontents{}

\newpage{}

\section{Introduction}

Beurling's theorem gave an analytic description of invariant subspace
for a shift operator of multiplicity one. Later this result has been
generalized to shifts of arbitrary multiplicity. Through the study
of the topology of invariant subspace lattice Douglas and Pearcy have
proposed a problem of path connected component of the invariant subspace
lattice. Yang studied the relationship between one parameter families
of invariant subspaces of shift of multiplicity one and their respective
analytic description. Present paper generalizes the result to arbitrary
finite multiplicity.

\subsection{Shift operator and Beurling's theorem}

Complete analytic description of invariant subspaces of shift operator
of multiplicity one was given by Beurling \cite{Beurling1948}. We
bring main highlights of the theory (cf. Rosenthal and Radjavi \cite{Rosenthal-Radjavi:2003}). 

Let $H$ be a separable, complex, infinite-dimensional Hilbert space
with orthonormal basis $\{e_{n}\}_{n\in\mathbb{N}}$. Let $B(H)$
denote collection of all bounded linear operators on $H$. For an
operator $T\in B(H)$ a (closed) subspace $M\subset H$ is invariant
if $TM\subset M$ and reducing if in addition $TM^{\perp}\subset M^{\perp}$.
For given operator $T$ collection of all invariant subspaces ordered
by inclusion form a lattice denoted $\textrm{Lat}(T)$. 

A unilateral forward shift of multiplicity one is an isometry operator
$S\in B(H)$, whose action on Hilbert space basis vectors $\left\{ e_{n}\right\} _{n\in\mathbb{N}}$
is a shift forward $S(e_{n})=e_{n+1}$. If instead we enumerate the
basis of $H$ as $\{e_{n}\}_{n\in\mathbb{Z}}$ then isometry $U\in B(H)$
acting on $H$ by $U(e_{n})=e_{n+1}$ is called a bilateral forward
shift of multiplicity one. $U$ is a surjective isometry, $S$ is
an isometry, but not surjective. Both operators have adjoints, backwards
shifts of respective basis.

Description of invariant subspaces for shift operators is based on
their analytic interpretation, where Hilbert space is identified with
$L^{2}\left(\mathbb{T}\right)$, square integrable functions on the
circle (with normalized arc-length measure). Collection $\{z^{n}\}_{n\in\mathbb{Z}}$
where $z^{n}$ is a polynomial on $\mathbb{T}$ is a Hilbert basis
and bilateral shift is acting as operator $M_{z}$, multiplication
by $z$. For unilateral shift we have to consider Hardy space $H^{2}=\bigvee_{n\in\mathbb{N}}z^{n}$,
a subspace of $L^{2}$ restricted to the span of positive powers.
Unilateral shift is identified with $M_{z\big|H^{2}}$. With this
interpretation we can now formulate Beurling's theorem:
\begin{thm}
(Beurling): A subspace $M\subseteq L^{2}(\mathbb{T})$ is invariant
and non-reducing for the bilateral shift $U$ if and only if there
exists a measurable function $G(z)$ on $\mathbb{T}$ with $|G(z)|=1$
ae, such that $M=GH^{2}$. 

Furthermore two spaces of the form $G_{1}H^{2}$ and $G_{2}H^{2}$with
$|G_{1}|=|G_{2}|=1$ $z$-ae are equal if and only if the ratio $\frac{G_{1}}{G_{2}}$
is constant $z$-ae. 
\end{thm}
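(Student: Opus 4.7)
The plan is to handle the two implications separately and then establish uniqueness. For the sufficiency direction, assuming $M=GH^{2}$ with $|G|=1$ a.e., $U$-invariance is immediate from $UM=zGH^{2}=G(zH^{2})\subseteq GH^{2}=M$; and $M$ fails to be reducing because $U^{*}G=\bar{z}G\in GH^{2}$ would force $\bar{z}\in H^{2}$ after dividing by $G$, which is absurd.

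For the nontrivial direction, suppose $M$ is invariant and non-reducing. Since $U$ is unitary this is equivalent to the strict inclusion $UM\subsetneq M$, so the wandering subspace $W=M\ominus UM$ is nonzero. The key calculation is to pick $G\in W$ with $\|G\|=1$ and use that $z^{n}G\in UM\perp G$ for every $n\geq 1$ to obtain $\int_{\mathbb{T}}|G|^{2}z^{-n}\,dm=0$; conjugating extends this to all $n\neq 0$, so $|G|^{2}$ is constant and $|G|=1$ a.e. Repeating the argument with any other $F\in W$ by considering $\langle z^{n}F,G\rangle$ shows $F\bar{G}$ has only its zeroth Fourier coefficient nonzero, so $F$ is a scalar multiple of $G$, and therefore $W=\mathbb{C}G$ is one-dimensional.

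Next I would invoke the Wold-type decomposition for the isometry $U|_{M}$, writing $M=\bigoplus_{n\geq 0}U^{n}W\oplus M_{\infty}$ with $M_{\infty}=\bigcap_{n\geq 0}U^{n}M$. The direct-sum part equals $\bigvee_{n\geq 0}z^{n}G=GH^{2}$, while $M_{\infty}$ is doubly $U$-invariant and therefore reducing. I expect this to be the main obstacle: the hypothesis that $M$ is non-reducing must be strong enough to force $M_{\infty}=\{0\}$, which requires reading ``non-reducing'' as ``containing no nontrivial reducing subspace'' so as to exclude hybrid pieces such as $\chi_{E}L^{2}\subseteq M$. Once this is granted, the decomposition collapses to $M=GH^{2}$.

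For the uniqueness clause, if $G_{1}H^{2}=G_{2}H^{2}$ with $|G_{i}|=1$ a.e., then $G_{2}=G_{1}\phi$ and $G_{1}=G_{2}\psi$ for some $\phi,\psi\in H^{2}$ satisfying $\phi\psi=1$ and $|\phi|=1$ a.e. Thus $\phi$ extends to a bounded holomorphic function on the disk with bounded holomorphic inverse, and the maximum modulus principle forces $|\phi|$ to be constant, whence $\phi$ is a unimodular constant and $G_{1}/G_{2}$ is constant a.e.
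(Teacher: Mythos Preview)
The paper does not prove this statement; Beurling's theorem is quoted as background from the literature (with a reference to Rosenthal--Radjavi), so there is no proof in the paper to compare against. Your outline is essentially the standard argument and is correct except at the one point you yourself flag.

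The gap is your handling of $M_{\infty}=\bigcap_{n\geq 0}U^{n}M$. You propose to escape by reinterpreting ``non-reducing'' as ``containing no nontrivial reducing subspace,'' but this is unnecessary and changes the hypothesis. With the standard reading (simply $UM\neq M$) one can still conclude $M_{\infty}=\{0\}$, using the very function $G$ you have already produced. Indeed, the Wold decomposition gives $M_{\infty}\perp U^{n}W$ for all $n\geq 0$, so $M_{\infty}\perp z^{n}G$ for $n\geq 0$. Since $M_{\infty}$ is reducing you also have $U^{-n}M_{\infty}=M_{\infty}$, hence for $n\geq 1$ and $f\in M_{\infty}$,
\[
\langle f,\,z^{-n}G\rangle=\langle z^{n}f,\,G\rangle=0
\]
because $z^{n}f\in M_{\infty}\perp W$. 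Thus every $f\in M_{\infty}$ is orthogonal to $\{z^{n}G:n\in\mathbb{Z}\}$. But $|G|=1$ a.e.\ makes multiplication by $G$ unitary on $L^{2}(\mathbb{T})$, so this set spans all of $L^{2}(\mathbb{T})$, forcing $f=0$. Hence $M_{\infty}=\{0\}$ and $M=GH^{2}$ without any strengthening of the hypothesis. The rest of your argument (sufficiency, one-dimensionality of $W$, and the uniqueness clause via $\phi\psi=1$ in $H^{\infty}$ and the maximum modulus principle) is fine.
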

The functions $G\in H^{2}$ with $|G(z)|\stackrel{\textrm{ae}}{=}1$
are called an inner functions.

\subsection{Shifts of higher multiplicity}

Let $K$ be a separable Hilbert space of dimension $n=\dim(K)$. For
the purpose of construction of generalization of Beurling's theorem
dimension can be countable, though the main result of present paper
is only covering arbitrary finite dimension. Let $H=\bigoplus_{i\in\mathbb{Z}}K_{i}$
be an orthogonal sum of infinite copies of $K$, with the derived
summary inner product $\langle\sum v_{i};\sum w_{i}\rangle_{H}=\sum\langle v_{i};w_{i}\rangle_{K_{i}}$.
\emph{Bilateral} shift of multiplicity $n$ is an unitary operator
$U$ which is shifting each copy in the sum one position forward,
$UK_{i}=K_{i+1}$. The restriction of this on space $\bigoplus_{i\in\mathbb{N}}K_{i}$
is a \emph{unilateral} shift of multiplicity $n$. Similar to the
shift of multiplicity one, shift of higher multiplicity also has analytic
interpretation and the description of its invariant subspace can also
be generalized.

For analytic interpretation consider $K$-valued functions from unit
circle\linebreak{}
 $f:\mathbb{T}\rightarrow K$. Define $f$ to be measurable,
if $z\rightarrow\langle f(z),x\rangle_{K}$ is measurable for all
$x\in K$. It is not hard to show that function $z\rightarrow\|f(z)\|$
is also measurable and also for two $K$ valued measurable functions
$f,\, g$ the inner product $z\rightarrow\langle f(z),g(z)\rangle$
is measurable as well. This defines a Hilbert space $L^{2}(K)$ with
inner product $\langle f,g\rangle_{L^{2}(K)}=\int_{\mathbb{T}}\langle f(z),g(z)\rangle_{K}d\mu$.
Completeness of this space easily follows from the representation
of its elements with Fourier series. Every $f\in L^{2}\left(K\right)$
can be uniquely identified with series, $f=\sum_{i\in\mathbb{Z}}x_{i}e_{i}$
with $x_{i}\in K$ and this representation has the property that $\langle f(z),x\rangle\stackrel{z-ae}{=}\sum\langle x_{i},x\rangle z^{i}$
is true for almost everywhere for $z\in\mathbb{T}$ and for all $x\in K$.
It is also easy to show that for $f=\sum_{i\in\mathbb{Z}}x_{i}e_{i}$
we have the familiar formula $\|f\|^{2}=\sum_{i\in\mathbb{Z}}\|x_{i}\|^{2}$. 

As in one dimensional case, analytic interpretation of a bilateral
shift of multiplicity $n$ is a multiplication by $z$ on space $L^{2}(K)$,
operator $M_{z}$. Unilateral shift of multiplicity $n$ is an operator
$S:=U_{|H^{2}(K)}$ where the space \[
H^{2}(K):=\{f\in L^{2}(K)\,|\, f=\sum_{n\in\mathbb{N}}x_{i}e_{i}\}\]
 is subspace of $L^{2}(K)$ with coefficients of negative powers being
zero.

To generalize the notion of the inner functions we will consider operator
valued functions defined on the unit circle $\mathbb{T}$. A function
$F:\mathbb{T}\rightarrow B(K)$ is said to be measurable if for all
$x\in K$ the $K$-valued function $z\rightarrow F(z)x$ is measurable
(as previously defined). Let $\|F\|_{\infty}:=\textrm{esssup}_{z\in\mathbb{T}}\|F(z)\|_{B(K)}$.
When $\|F\|_{\infty}<\infty$ we can define an operator $M_{F}$ on
$L^{2}(K)$ by $(M_{F}f)(z)=F(z)f(z)$. Let \[
L^{\infty}(\mathbb{T},B(K)):=\{F:\mathbb{T}\rightarrow B(K)\,|\,\|F\|_{\infty}<\infty\}\]
 be a collection of all bounded measurable operator valued functions
from the unit circle and let \[
L^{\infty}(B(K)):=\{M_{F}\,|\, F\in L^{\infty}(\mathbb{T},B(K))\}\]
 be a collection of respective operators on $L^{2}(K)$. It is easy
to show that map \linebreak{}
$F\rightarrow M_{F}$ is an algebra isomorphism which
respects adjoint operation. As a consequence $M_{F}$ is unitary iff
$F$ is $z\in\mathbb{T}$ ae-unitary. We define a collection of analytic
elements of $L^{\infty}(\mathbb{T},B(K))$, described by \[
H^{\infty}(\mathbb{T},B(K)):=\{F\in L^{\infty}(\mathbb{T},B(K))\,|\, H^{2}(K)\in\textrm{Lat}(M_{F})\},\]
which respectively defines class of operators \[
H^{\infty}(B(K)):=\{M_{F}\,|\, F\in H^{\infty}(\mathbb{T},B(K))\}.\]
Operators in $H^{\infty}(B(K))$ have power series representation
which, though not crucial for our exposition, is helpful for an additional
intuitive perspective on the argument (cf Sz-Nagy and Foais \cite{BelaSz.-Nagy:2008}).
Consider an operator $G_{i}\in B(N;K)$ for $i\in\mathbb{N}$ and
let $G(z)=\sum_{i\in\mathbb{N}}z^{i}G_{i}$ where series converge
(weakly, strongly, in norm, all are the same for separable Hilbert
space) for all $z\in D$ in the open unit disk. Such series with a
condition that $\|G(z)\|\leq C$ for all $z\in D$ is called a bounded
analytic function. Each series can be associated with the operator
$(N,K,G)$ on $H^{2}(K)$ through strong limit $G(e^{t\theta})=\lim_{z\rightarrow e^{i\theta}}G(z)$,
where convergence is along non tangential path. This operator $(N,K,G)$
is the same as multiplication operator previously defined based on
operator valued functions, with initial space $H^{2}(N)$ and final
space $H^{2}(K)$, both definitions forming collection $H^{\infty}(B(K))$.
Such operator $(N,K,G)$ (or alternatively $M_{G}$) is an isometry
on $H^{2}(K)$ if and only if $G(e^{t\theta})\in B(N,K)$ is isometry
ae-$\theta$, in which case it is called inner function and has initial
space $H^{2}(N)$ and final space $H^{2}(K)$. In our main result,
we will identify $N$ with $\mathbb{C}^{m}$, $K$ with $\mathbb{C}^{n}$
and consider $G$ to be an isometry from $H^{2}(\mathbb{C}^{m})$
to $H^{2}(\mathbb{C}^{n})$. 

Generalization of Beurling's theorem to higher dimensions also involves
Wold decomposition\emph{:}
\begin{prop}
\emph{(Wold decomposition)}: Every invariant subspace $M$ of the
bilateral shift $U$ has a unique decomposition of the form $M=M_{1}\oplus M_{2}$
where $M_{1}$ is a reducing subspace for $U$, $M_{2}$ is an invariant
subspace of $U$ and $\bigcap_{i\in\mathbb{N}}U^{i}M_{2}=\{0\}$
\end{prop}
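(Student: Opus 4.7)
The plan is to build the decomposition explicitly by setting $M_{1}:=\bigcap_{i\geq 0}U^{i}M$ and then $M_{2}:=M\ominus M_{1}$, and verify the three desired properties one by one before handling uniqueness. The whole argument rests on two elementary facts about the bilateral shift $U$: it is unitary, so $U$ preserves arbitrary intersections, and the hypothesis $UM\subseteq M$ makes the sequence of subspaces $M\supseteq UM\supseteq U^{2}M\supseteq\cdots$ nested and decreasing.

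First I would show $M_{1}$ is reducing. It is a closed subspace of $M$ as an intersection of closed subspaces. Using unitarity, $UM_{1}=\bigcap_{i\geq 0}U^{i+1}M=\bigcap_{i\geq 1}U^{i}M$, and since the family $\{U^{i}M\}$ is decreasing (because $UM\subseteq M$), dropping the $i=0$ term does not change the intersection, so $UM_{1}=M_{1}$. Applying $U^{*}=U^{-1}$ to both sides gives $U^{*}M_{1}=M_{1}$ as well, which is exactly the reducing condition. Second, for $M_{2}$: since $M_{1}$ is reducing, its orthogonal complement $M_{1}^{\perp}$ is $U$-invariant, and the intersection $M_{2}=M\cap M_{1}^{\perp}$ of two $U$-invariant subspaces is $U$-invariant. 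Third, for the triviality property, observe that $M_{2}\subseteq M$ forces $U^{i}M_{2}\subseteq U^{i}M$, so $\bigcap_{i\geq 0}U^{i}M_{2}\subseteq\bigcap_{i\geq 0}U^{i}M=M_{1}$; taking $i=0$ also shows this intersection lies in $M_{2}$, hence in $M_{1}\cap M_{2}=\{0\}$.

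For uniqueness, suppose $M=N_{1}\oplus N_{2}$ is another such decomposition. Because $N_{1}$ is reducing, $U^{i}N_{1}=N_{1}$, and because $N_{1}^{\perp}$ is $U$-invariant we get $U^{i}N_{2}\subseteq N_{1}^{\perp}$ for all $i\geq 0$. Hence the orthogonal decomposition persists under iteration:
\[
U^{i}M=N_{1}\oplus U^{i}N_{2},
\]
with all $U^{i}N_{2}$ sitting inside the fixed subspace $N_{1}^{\perp}$. Intersecting over $i$ and using that orthogonal sums commute with intersections when one summand is fixed gives $\bigcap_{i\geq 0}U^{i}M=N_{1}\oplus\bigcap_{i\geq 0}U^{i}N_{2}=N_{1}\oplus\{0\}=N_{1}$, so $N_{1}=M_{1}$ and consequently $N_{2}=M_{2}$.

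The main obstacle, modest as it is, is the direction $M_{1}\subseteq UM_{1}$ in the reducing step: it is precisely here that one must use both that $U$ is bijective (to pull the $U$ through the intersection) and that the chain $U^{i}M$ is nested (to recover the $i=0$ term after the index shift). The distributivity step in the uniqueness argument needs to be spelled out as well, since in general intersections do not split across orthogonal sums; the point is that the first summand $N_{1}$ is the same for every $i$ and the second summands all lie in the complementary subspace $N_{1}^{\perp}$, which is exactly the setting where distributivity does hold.
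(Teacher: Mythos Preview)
Your argument is correct and is the standard construction of the Wold decomposition: take $M_{1}=\bigcap_{i\geq 0}U^{i}M$, set $M_{2}=M\ominus M_{1}$, and verify the three properties and uniqueness exactly as you do. The only point worth noting is that the paper does not actually prove this proposition; it is stated in the introductory section as a known ingredient (cited from the literature, in the spirit of \cite{Rosenthal-Radjavi:2003} and \cite{BelaSz.-Nagy:2008}) on the way to the generalized Beurling theorem, so there is no in-paper proof to compare against. Your write-up would serve perfectly well as the omitted proof.
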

With this now we are able to formulate generalization of Beurling's
theorem:
\begin{thm}
\emph{(Generalization of Beurling):} A subspace $M$ of a Hilbert
space\linebreak{}
$H=\bigoplus_{i\in\mathbb{Z}}K_{i}$ (where $K_{i}$
is a Hilbert space) is an invariant subspace of the bilateral shift
$U$ if and only if $M=M_{1}\oplus M_{V}H^{2}(N)$ where $M_{1}$
reduces $U$, $N\subseteq K$ is a subspace, $V\in L^{\infty}(\mathbb{T},B(K))$
is a $z$-ae partial isometry on $K$ with initial space $N$.

\emph{Further: }Subspace\emph{ }$M_{1}$ is uniquely determined by
subspace $M$; and 

If $V_{1}\in L^{\infty}(\mathbb{T},B(K))$ is a partial isometry with
an initial space $N_{1}$ and\linebreak{}
$M_{V}H^{2}(N)=M_{V_{1}}H^{2}(N_{1})$, then there is
a partial isometry $W$ with an initial space $N_{1}$ and final space
$N$ such that $M_{V_{1}}=M_{V}W$.
\end{thm}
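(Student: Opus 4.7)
The plan is to first apply the Wold decomposition to peel off $M_1$, then construct the partial isometry $V$ and the subspace $N$ by analyzing the wandering subspace of the complementary piece $M_2$, and finally to deduce the uniqueness clauses from the intrinsic uniqueness of this wandering subspace.

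I would begin by invoking the Wold decomposition to write $M = M_1 \oplus M_2$ with $M_1$ reducing $U$ and $\bigcap_{i \in \mathbb{N}} U^i M_2 = \{0\}$; this settles both existence and uniqueness of $M_1$. For the remaining piece set $W := M_2 \ominus U M_2$. A telescoping argument using $\bigcap_i U^i M_2 = \{0\}$ yields $M_2 = \bigoplus_{i \geq 0} U^i W$. The pivotal pointwise observation is that any $f,g \in W$ satisfy, for all $i \neq 0$,
\[
0 = \langle U^i f, g \rangle_{L^2(K)} = \int_{\mathbb{T}} z^i \langle f(z), g(z) \rangle_K \, d\mu,
\]
so the scalar $L^1$-function $z \mapsto \langle f(z), g(z) \rangle_K$ has all non-zero Fourier coefficients vanishing and therefore equals its mean $\langle f,g \rangle_{L^2(K)}$ almost everywhere.

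Next I would pick a countable orthonormal basis $\{f_\alpha\}$ of $W$ (countable because $K$, and hence $W$, is separable). By the previous step applied pair by pair over a countable index set, $\{f_\alpha(z)\}$ is orthonormal in $K$ on a common full-measure subset of $\mathbb{T}$. Choose a subspace $N \subseteq K$ of dimension $|\{f_\alpha\}|$ with orthonormal basis $\{e_\alpha\}$, and define $V(z) : K \to K$ by $V(z) e_\alpha = f_\alpha(z)$ and $V(z) = 0$ on $N^\perp$. Then $V(z)$ is $z$-a.e.\ a partial isometry on $K$ with initial space $N$, $\|V\|_\infty \leq 1$, and measurability of each $f_\alpha$ promotes to measurability of $V$ as an operator-valued function, so $V \in L^\infty(\mathbb{T}, B(K))$. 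Since multiplication by $z$ realizes $U$, the identity $M_V H^2(N) = \bigoplus_{i \geq 0} U^i (V N) = \bigoplus_{i \geq 0} U^i W = M_2$ follows.

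For the uniqueness of the $V$-representation, the wandering subspace $W$ is intrinsic to $M_2$: hence if $M_V H^2(N) = M_{V_1} H^2(N_1) = M_2$, both $M_V|_N$ and $M_{V_1}|_{N_1}$ (acting on constants) are isometries from $N$ and $N_1$ respectively onto the same $W$. The composition $W_0 := (M_V|_N)^{-1} M_{V_1}|_{N_1} : N_1 \to N$ is then a unitary; extending it by zero on $N_1^\perp$ produces a $z$-independent partial isometry $W \in B(K)$ with initial space $N_1$ and final space $N$, and the identity $V_1(z) n_1 = V(z)(W n_1)$ holds a.e., yielding $M_{V_1} = M_V W$. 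I expect the main obstacle to be the measurability step: one must ensure that pointwise orthonormality of $\{f_\alpha(z)\}$ holds on a single co-null set (where separability of $K$ is essential) and that $V$, defined a priori only on basis vectors, genuinely extends to a measurable $B(K)$-valued function in the sense of the paper.
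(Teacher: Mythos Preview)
The paper does not supply a proof of this theorem; it is quoted in the introductory section as background, with the surrounding exposition pointing to standard references (Rosenthal--Radjavi, Sz.-Nagy--Foia\c{s}). There is therefore nothing in the paper to compare your argument against.

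That said, your proof is correct and is precisely the classical one found in those references: Wold decomposition isolates $M_1$ uniquely; the wandering subspace $W=M_2\ominus UM_2$ satisfies $M_2=\bigoplus_{i\ge0}U^iW$; the Fourier-coefficient computation forces $z\mapsto\langle f(z),g(z)\rangle_K$ to be a.e.\ constant for $f,g\in W$, so an orthonormal basis of $W$ is pointwise orthonormal in $K$ on a common conull set; and $V(z)$ is built by sending a fixed basis of $N$ to these pointwise values. Your uniqueness argument is also the standard one: since $M_{V_1}$ is an isometry on $H^2(N_1)$, it carries $N_1$ (the constants) onto the intrinsic wandering subspace $W$, and composing with $(M_V|_N)^{-1}$ gives the required constant partial isometry. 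The measurability concern you flag is handled exactly as you indicate---separability makes the index set countable, so the exceptional sets can be united and $z\mapsto V(z)x$ is a norm-convergent countable sum of measurable $K$-valued functions.
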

Since unilateral shift has no reducing subspace, we also have following
corollary:
\begin{cor}
A subspace $M$ of a Hilbert space $H=\bigoplus_{i\in\mathbb{Z}}K_{i}$
(where $K_{i}$ is a Hilbert space) is an invariant subspace of the
unilateral shift $S$ if and only if\linebreak{}
$M=M_{G}H^{2}(N)$ where $N$ is a subspace of $K$ and
$G\in H^{\infty}(\mathbb{T},B(K))$ is an operator valued function
such that $G(z)$ is $z-ae$ partial isometry with initial space $N$. 
\end{cor}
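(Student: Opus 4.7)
The plan is to derive this corollary directly from the generalization of Beurling's theorem by showing that, for unilateral invariant subspaces, the reducing summand of the Wold decomposition vanishes. First I would observe that any $M\in\textrm{Lat}(S)$ sits inside $H^{2}(K)\subseteq L^{2}(K)$ and that invariance under $S=U_{|H^{2}(K)}$ already gives $UM\subseteq M$, so $M$ is an invariant subspace of the bilateral shift $U$. Applying the generalized Beurling theorem to $M$ viewed as a subspace of $L^{2}(K)$ yields a decomposition $M=M_{1}\oplus M_{V}H^{2}(N)$ with $M_{1}$ reducing $U$, $N\subseteq K$, and $V\in L^{\infty}(\mathbb{T},B(K))$ a $z$-ae partial isometry with initial space $N$.

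The heart of the proof is showing $M_{1}=\{0\}$, and I expect this to be the main (if mild) obstacle, since one must exclude the presence of any wandering reducing piece. Because $M_{1}$ reduces $U$, we have $U^{n}M_{1}=M_{1}$ for every $n\in\mathbb{Z}$, so in particular $M_{1}=U^{n}M_{1}\subseteq U^{n}H^{2}(K)$ for all $n\geq 0$. But $U^{n}H^{2}(K)$ consists of $L^{2}(K)$-functions whose Fourier coefficients vanish for indices below $n$, whence $\bigcap_{n\geq 0}U^{n}H^{2}(K)=\{0\}$ by the Parseval identity recorded earlier in the paper. Hence $M_{1}=\{0\}$ and $M=M_{V}H^{2}(N)$.

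Next I would verify that $V\in H^{\infty}(\mathbb{T},B(K))$, which is the only remaining gap between the bilateral description and the form claimed in the corollary. Since $V(z)$ is ae a partial isometry with initial space $N$, it annihilates $N^{\perp}$ pointwise, so $M_{V}H^{2}(N^{\perp})=0$. Using the orthogonal decomposition $H^{2}(K)=H^{2}(N)\oplus H^{2}(N^{\perp})$ this gives $M_{V}H^{2}(K)=M_{V}H^{2}(N)=M\subseteq H^{2}(K)$, i.e.\ $H^{2}(K)\in\textrm{Lat}(M_{V})$, which is precisely the defining condition of $H^{\infty}(\mathbb{T},B(K))$. Renaming $V$ as $G$ then delivers the required description.

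For the converse direction, given $G\in H^{\infty}(\mathbb{T},B(K))$ with $G(z)$ ae a partial isometry of initial space $N$, I would use that multiplication operators commute, so that $SM_{G}=M_{z}M_{G}=M_{G}M_{z}$ on $H^{2}(K)$. Consequently $S\bigl(M_{G}H^{2}(N)\bigr)=M_{G}\bigl(zH^{2}(N)\bigr)\subseteq M_{G}H^{2}(N)$, and $M_{G}H^{2}(N)\subseteq H^{2}(K)$ by the $H^{\infty}$ hypothesis together with $H^{2}(N)\subseteq H^{2}(K)$, showing $M_{G}H^{2}(N)\in\textrm{Lat}(S)$. Apart from the triviality of the reducing component, the whole argument is bookkeeping from the bilateral theorem.
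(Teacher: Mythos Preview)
Your approach is essentially the paper's: the corollary is stated immediately after the bilateral Beurling theorem with the single sentence ``Since unilateral shift has no reducing subspace, we also have following corollary.'' You derive it the same way---apply the bilateral theorem and kill the reducing summand $M_{1}$---but you supply the actual argument, and yours is in fact more accurate. The paper's phrase ``unilateral shift has no reducing subspace'' is literally false for multiplicity $n>1$ (e.g.\ $H^{2}(K')$ for any proper $K'\subset K$ reduces $S$); what is really needed, and what you prove, is that $H^{2}(K)$ contains no nonzero subspace reducing the \emph{bilateral} shift $U$, via $M_{1}=U^{n}M_{1}\subseteq U^{n}H^{2}(K)$ and $\bigcap_{n\geq 0}U^{n}H^{2}(K)=\{0\}$. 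You also make explicit two points the paper leaves implicit: that the $L^{\infty}$ symbol $V$ automatically lands in $H^{\infty}$ because $M_{V}H^{2}(K)=M_{V}H^{2}(N)\subseteq H^{2}(K)$, and the easy converse direction. So: same route, but your version is the one that actually works as written.
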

There is also a uniqueness result similar to the bilateral shift.

At this point we are ready to formulate the main result of the present
paper. 
\begin{thm}
\label{thm:main result}Let $H^{2}(\mathbb{C}^{n})$ be Hardy space
for some finite $n$ and let $\{p_{t}\}_{t\in[0,1]}$ be a family
of orthogonal projections such that:

1. Family is continuous in norm, $\lim_{t\rightarrow\tau}\|p_{t}-p_{\tau}\|=0$;

2. Each projection $p_{t}H^{2}(\mathbb{C}^{n})$ produces an invariant
subspace of a unilateral shift operator (of multiplicity $n$). 

Then there exists integer $m\leq n$ and a choice of a family of inner
functions $G_{t}\in H^{\infty}(\mathbb{T},B(\mathbb{C}^{m},\,\mathbb{C}^{n}))$
for $t\in[0,1]$ such that:

1. $p_{t}H_{\mathbb{C}^{n}}^{2}=G_{t}H_{\mathbb{C}^{m}}^{2}$; and

2. $\{G_{t}\}_{t\in[0,1]}$ are sup-norm continuous,\\
$\textrm{esssup}_{\theta}\|G_{t}(e^{i\theta})-G_{\tau}(e^{i\theta})\|_{B(\mathbb{C}^{m};\mathbb{C}^{n})}\stackrel{t\rightarrow\tau}{=}0$
\end{thm}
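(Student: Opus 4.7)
The proof naturally splits into (i) identifying a constant multiplicity $m$ for the family and (ii) producing a sup-norm continuous family of inner functions $G_{t}$ with $G_{t}H^{2}(\mathbb{C}^{m})=p_{t}H^{2}(\mathbb{C}^{n})=:M_{t}$.

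For (i), one works with the wandering subspaces. Each $M_{t}$ has wandering subspace $W_{t}:=M_{t}\ominus SM_{t}$ of some dimension $m_{t}\leq n$, and the Beurling corollary realizes $G_{t}$ as pointwise isometry $\mathbb{C}^{m_{t}}\to\mathbb{C}^{n}$. The orthogonal projection onto $W_{t}$ is $P_{W_{t}}=p_{t}-Sp_{t}S^{*}$, which is norm continuous in $t$ by continuity of $p_{t}$. Since norm-continuous families of finite-rank projections have locally constant rank and $[0,1]$ is connected, $m_{t}$ is a constant $m\leq n$.

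For (ii), I would first produce a norm-continuous family of isometries $J_{t}:\mathbb{C}^{m}\to W_{t}\subset H^{2}(\mathbb{C}^{n})$ by a Kato/polar-type construction: fix $J_{0}$ an isometry from $\mathbb{C}^{m}$ onto $W_{0}$, and set $J_{t}:=P_{W_{t}}J_{0}(J_{0}^{*}P_{W_{t}}J_{0})^{-1/2}$ on a neighborhood of $0$, where the positive operator in parentheses is invertible whenever $\|P_{W_{t}}-P_{W_{0}}\|$ is small. By compactness of $[0,1]$ this extends via finitely many local patches, the constant $U(m)$-ambiguity between overlaps being reconciled by absorbing a fixed unitary on $\mathbb{C}^{m}$. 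Defining $G_{t}$ pointwise by $G_{t}(z)e_{j}:=(J_{t}e_{j})(z)$, the standard Fourier-coefficient computation using the wandering condition $W_{t}\perp S^{k}W_{t}$ for $k\geq1$ shows that any orthonormal basis of a wandering subspace is pointwise orthonormal almost everywhere; hence $G_{t}(z)$ is isometric for a.e.\ $z$, so $G_{t}$ is inner, and the Wold decomposition gives $G_{t}H^{2}(\mathbb{C}^{m})=\bigoplus_{k\geq0}z^{k}W_{t}=M_{t}$.

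The essential obstacle is upgrading the norm continuity of $J_{t}$ -- which yields only $L^{2}$-continuity of the columns $J_{t}e_{j}\in H^{2}(\mathbb{C}^{n})$ -- to sup-norm continuity of $G_{t}$. Under the isometric identification $F\mapsto M_{F}$ between $L^{\infty}(\mathbb{T},B(\mathbb{C}^{m},\mathbb{C}^{n}))$ and the multiplication operators, sup-norm continuity of $G_{t}$ is equivalent to operator-norm continuity of $M_{G_{t}}$, and $L^{2}$-convergence of vector-valued unimodular functions can fail to give $L^{\infty}$-convergence. To handle this I would use the reproducing-kernel identity $G_{t}(z)G_{t}(z)^{*}=(1-|z|^{2})(p_{t}K_{z})(z)$, valid for $|z|<1$, which recovers the positive operator $G_{t}(z)G_{t}(z)^{*}$ norm-continuously from $p_{t}$ on compact subsets of the open disk. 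At a point $a\in\mathbb{D}$ where $G_{t_{0}}(a)$ has full column rank $m$ -- such $a$ exists since the $m\times m$ minors of $G_{t_{0}}$ are analytic functions not all identically zero, and the condition is open in $t$ by lower semicontinuity of rank -- one normalizes the residual $U(m)$-ambiguity of $G_{t}$ by imposing $G_{t}(a)=I_{t}\,(I_{t}^{*}G_{t}(a)G_{t}(a)^{*}I_{t})^{1/2}$, where $I_{t}:\mathbb{C}^{m}\to\textrm{range}(G_{t}(a))\subset\mathbb{C}^{n}$ is a continuously chosen isometry (available because the path of $m$-planes in $\mathbb{C}^{n}$ is contractible and the Stiefel bundle over the Grassmannian is locally trivial). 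This pins down $G_{t}$ uniquely, and combined with the $L^{2}$-control of the frame $J_{t}$ and the non-tangential boundary structure of $H^{\infty}$ functions, yields sup-norm continuity on the local patch; compactness of $[0,1]$ then patches these local constructions into the global sup-norm continuous family $\{G_{t}\}_{t\in[0,1]}$.
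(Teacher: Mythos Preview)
Your overall architecture matches the paper's: constant wandering dimension via rank stability of norm-close projections, then $L^{2}$ continuity via a reproducing-kernel/polar normalization at an interior point where $G_{t}$ has full column rank, patched over a finite cover of $[0,1]$. (The paper normalizes with square roots of $G_{t}(\lambda)G_{t}(\lambda)^{*}$ rather than Kato projections of the wandering space, but both routes give $L^{2}$ continuity.)

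The genuine gap is the sup-norm upgrade. You correctly flag this as ``the essential obstacle'' but then do not actually cross it. The identity $G_{t}(z)G_{t}(z)^{*}=(1-|z|^{2})(p_{t}K_{z})(z)$ controls $G_{t}$ only at \emph{interior} points $z\in\mathbb{D}$; uniform convergence on compacta of $\mathbb{D}$ together with $L^{2}$ convergence on $\mathbb{T}$ does not by itself force $L^{\infty}(\mathbb{T})$ convergence, and your closing clause (``combined with the $L^{2}$-control\ldots and the non-tangential boundary structure of $H^{\infty}$ functions, yields sup-norm continuity'') is an assertion, not an argument. Non-tangential limits exist for each fixed $t$, but nothing you have written makes them uniform in $t$.

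The paper supplies a specific mechanism you are missing. It introduces the \emph{two-variable} kernel $K_{t}(z,w)=\frac{1}{1-r\bar{z}w}G_{t}(w)G_{t}(rz)^{*}$ with $w\in\mathbb{T}$ on the boundary and $rz$ strictly inside the disk, and observes that integrating against $K_{t}^{*}(z,\cdot)$ is exactly $f\mapsto(p_{t}f)(rz)$. Testing against suitable approximate point masses then yields $\|K_{t}-K_{s}\|_{L^{\infty}(\mathbb{T}\times\mathbb{T})}\leq\sqrt{n/(1-r^{2})}\,\|p_{t}-p_{s}\|$. Fixing an interior point $r_{\eta}e^{i\theta_{\eta}}$ where $G_{s}(r_{\eta}e^{i\theta_{\eta}})$ is bounded below by $\eta$ and writing
\[
G_{t}(r_{\eta}e^{i\theta_{\eta}})G_{t}(w)^{*}-G_{s}(r_{\eta}e^{i\theta_{\eta}})G_{s}(w)^{*}
\]
then lets one extract $\mathrm{esssup}_{w\in\mathbb{T}}\|G_{t}(w)-G_{s}(w)\|$ from the kernel bound, up to an interior error term that the $L^{2}$ result already controls. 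The point is that the boundary values $G_{t}(w)$ must be tied \emph{directly} to $p_{t}$ through a mixed interior--boundary kernel; the purely interior identity you invoke cannot see the boundary sup-norm.
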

Present paper covers above result for an arbitrary finite $n$. In
section two we demonstrates continuity in $L^{2}$norm, third section
demonstrates continuity in sup-norm.

\subsection{The dimension of the wandering space.}

Given a family of norm continuous projections the first question is
whether the dimensions of respective inner functions, dimension of
the wandering space, of initial space $N$ in Beurling's theorem is
consistent across the family. This amounts to the existence of an
integer $m$ in the theorem \emph{\ref{thm:main result}}. The following
proposition addresses this question affirmatively.
\begin{prop}
Consider Hardy space $H^{2}(\mathbb{C}^{n})$ for some finite $n$
and a family of projections $\{p_{t}\}_{t\in[0,1]}$ such that:

1. Family is continuous in norm, $\lim_{t\rightarrow\tau}\|p_{t}-p_{\tau}\|=0$;

2. Each projection $p_{t}H^{2}(\mathbb{C}^{n})$ produces an invariant
subspace of a unilateral shift operator (of multiplicity $n$). 

Let $W_{t}=\left(I-S(S_{|p_{t}H^{2}})^{*}\right)p_{t}H_{\mathbb{C}^{m}}^{2}$
be respective wandering space.

Then $\textrm{dim}(W_{t})=\textrm{const}$ for all $t\in[0,1]$.\end{prop}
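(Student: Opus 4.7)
The plan is to realize the orthogonal projection onto $W_t$ as a norm-continuous function of $t$ taking values in the projections on $H^2(\mathbb{C}^n)$, and then invoke the classical fact that the rank of an orthogonal projection is locally constant in the norm topology, combined with connectedness of $[0,1]$.

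To begin, because $S$ is an isometry and $p_t H^2(\mathbb{C}^n)$ is $S$-invariant, the operator $Q_t := S p_t S^{*}$ is self-adjoint and satisfies $Q_t^{2} = S p_t S^{*} S p_t S^{*} = S p_t^{2} S^{*} = Q_t$ (using $S^{*}S = I$ and $p_t^{2}=p_t$), and a direct range computation shows that $Q_t$ is the orthogonal projection onto $S p_t H^2(\mathbb{C}^n)$. For any $x \in p_t H^2(\mathbb{C}^n)$, the restricted adjoint satisfies $(S_{|p_t H^2})^{*} x = p_t S^{*} x$, so $S(S_{|p_t H^2})^{*} x = Q_t x$, matching the formula in the proposition. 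Since $S p_t H^2 \subseteq p_t H^2$ one has $Q_t \leq p_t$ as projections, and therefore
\[
P_t \;:=\; p_t - S p_t S^{*}
\]
is the orthogonal projection onto $W_t = p_t H^2 \ominus S p_t H^2$.

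Next, norm continuity of $t \mapsto P_t$ follows from the triangle inequality and $\|S\|=\|S^{*}\|=1$:
\[
\|P_t - P_\tau\| \;\leq\; \|p_t - p_\tau\| + \|S(p_t - p_\tau)S^{*}\| \;\leq\; 2\,\|p_t - p_\tau\| \;\stackrel{t\to\tau}{\longrightarrow}\; 0.
\]
I would then invoke the standard fact that if two orthogonal projections $P, Q$ on a Hilbert space satisfy $\|P-Q\|<1$ then their ranges have the same Hilbert-space dimension (one proof: the partial isometry in the polar decomposition of $PQ$ restricts to a unitary between the ranges). Combined with norm continuity, this forces $t \mapsto \dim W_t$ to be locally constant on $[0,1]$, and hence globally constant by connectedness; the common value is exactly the integer $m$ appearing in Theorem \ref{thm:main result}.

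No step presents a substantial obstacle; the argument is a short assembly of well-known facts. The only minor points that require care are the identification of $(S_{|p_t H^2})^{*}$ with the restriction of $p_t S^{*}$ to $p_t H^2$ (a routine adjoint computation that uses $S$-invariance of $p_t H^2$), and the verification that $S p_t S^{*}$ is genuinely the projection onto $S p_t H^2$ — this crucially uses $S^{*}S = I$, i.e.\ that $S$ is an isometry rather than merely a contraction.
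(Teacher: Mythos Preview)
Your proof is correct and follows essentially the same route as the paper: show that the orthogonal projection onto $W_t$ depends norm-continuously on $t$, then invoke the standard fact that two orthogonal projections at distance less than $1$ have equal rank, so $\dim W_t$ is locally constant on $[0,1]$ and hence constant by connectedness. Your formula $P_t=p_t-Sp_tS^{*}$ is exactly the paper's operator $(I-S(S_{|p_tH^2})^{*})p_t$ after identifying $(S_{|p_tH^2})^{*}p_t=p_tS^{*}p_t$ and using $Sp_tS^{*}\le p_t$; your version yields the slightly sharper estimate $\|P_t-P_\tau\|\le 2\|p_t-p_\tau\|$, but the argument is the same.
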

\begin{proof}
We will use the fact, that if distance between two projections is
less then one, then they have the same rank. \[
\lim_{t\rightarrow\tau}\|\left(I-S(S_{|p_{t}H^{2}})^{*}\right)p_{t}-\left(I-S(S_{|p_{\tau}H^{2}})^{*}\right)p_{\tau}\|\leq\]
\[
\leq\|S\|\lim_{t\rightarrow\tau}\left\Vert (S_{|p_{t}H^{2}})^{*}p_{t}-(S_{|p_{\tau}H^{2}})^{*}p_{\tau}\right\Vert +\lim_{t\rightarrow\tau}\|p_{t}-p_{\tau}\|=0<1\]

that is (eventually) $W_{t}$ and $W_{\tau}$ have the same dimension
$m$, thus rank is constant on the whole $t\in[0,1]$ interval.
\end{proof}

\section{continuity in $L^{2}$ norm}

Beurling's theorem only defines inner function $G_{t}$ up to a unitary
rotation (for each $t\in[0,1]$). In order to construct continuous
path $G_{t}$ for $t\in[0,1]$ we need a consistent method of selecting
unique $G_{t}$ for each $t$. 

For the clarity of exposition, we will first consider a basic case
$m=n$ and then the general case $m\leq n$, whose treatment is only
different from that of the basic case by an additional care which
is needed for pinning down an unique $G_{t}$ for each $t$ from a
unitarily equivalent family. 
\begin{prop}
Consider Hardy space $H^{2}(\mathbb{C}^{n})$ for some finite $n$
and a family of projections $\{p_{t}\}_{t\in[0,1]}$ such that:

1. Family is continuous in norm, $\lim_{t\rightarrow\tau}\|p_{t}-p_{\tau}\|=0$;

2. Each projection $p_{t}H^{2}(\mathbb{C}^{n})$ produces an invariant
subspace of a unilateral shift operator (of multiplicity $n$). 

Consider analytical description of this subspace $G_{t}H^{2}(\mathbb{C}^{m})$
for some inner function $G_{t}$ from $H^{2}(\mathbb{C}^{m})$ to
$H^{2}(\mathbb{C}^{n})$ where $m\leq n$ and $p_{t}H^{2}(\mathbb{C}^{n})=G_{t}H^{2}(\mathbb{C}^{m})$

Then the family $\{G_{t}\}_{t\in[0,1]}$ can be chosen to be $L^{2}$
continuous, specifically \[
\int_{\theta}\|G_{t}(e^{i\theta})-G_{\tau}(e^{i\theta})\|_{B(\mathbb{C}^{m};\mathbb{C}^{n})}^{2}d\mu\stackrel{t\rightarrow\tau}{=}0\]

\end{prop}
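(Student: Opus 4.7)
The inner function $G_t$ is determined by the identity $p_t H^2(\mathbb{C}^n) = G_t H^2(\mathbb{C}^m)$ only up to right-multiplication by a unitary on $\mathbb{C}^m$, and this ambiguity corresponds precisely to the choice of an orthonormal basis of the wandering subspace
\[
W_t := p_t H^2(\mathbb{C}^n) \ominus S\bigl(p_t H^2(\mathbb{C}^n)\bigr).
\]
Indeed, if $(w_1^t,\ldots,w_m^t)$ is such a basis, then the unique inner function whose multiplication operator sends the constant $e_j \in \mathbb{C}^m \subset H^2(\mathbb{C}^m)$ to $w_j^t$, propagated by the shift intertwining $M_{G_t} S = S M_{G_t}$, recovers $G_t$. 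My plan is therefore (i) to verify that $t \mapsto P_{W_t}$ is norm-continuous, (ii) to lift it to a norm-continuous orthonormal frame $(w_j^t)_{j=1}^m$ on the whole of $[0,1]$, (iii) to build $G_t$ from this frame, and (iv) to bound the $L^2$-discrepancy $\int \|G_t - G_\tau\|^2 \, d\mu$ by the $H^2$-discrepancy of the frames.

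Step (i) is immediate from the identity $P_{W_t} = p_t - S p_t S^*$ (valid because $p_t H^2(\mathbb{C}^n)$ is $S$-invariant and $S$ is isometric, so $S p_t S^*$ is the orthogonal projection onto $S p_t H^2(\mathbb{C}^n)$), which gives $\|P_{W_t} - P_{W_\tau}\| \le 2\|p_t - p_\tau\|$; this is essentially the computation in the preceding proposition. For step (ii), the preceding proposition also guarantees $\dim W_t \equiv m$; combined with norm-continuity, the Sz.-Nagy/Kato conjugation lemma produces, locally around any $\tau$, a norm-continuous family of unitaries $U_t$ on $H^2(\mathbb{C}^n)$ with $U_\tau = I$ and $U_t P_{W_\tau} U_t^* = P_{W_t}$, so any chosen frame of $W_\tau$ is transported to a norm-continuous frame of $W_t$ nearby. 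A standard compactness argument on $[0,1]$ glues these local lifts into a global continuous frame.

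For steps (iii) and (iv), letting $G_t$ be the inner function determined by $M_{G_t} e_j = w_j^t$, a direct computation gives
\[
\int_{\mathbb{T}} \|G_t(z) - G_\tau(z)\|_{\mathrm{op}}^2 \, d\mu \;\le\; \int_{\mathbb{T}} \|G_t(z) - G_\tau(z)\|_{\mathrm{HS}}^2 \, d\mu \;=\; \sum_{j=1}^{m} \|w_j^t - w_j^\tau\|_{H^2(\mathbb{C}^n)}^2,
\]
and the right-hand side tends to $0$ as $t \to \tau$ by step (ii).

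The main obstacle is step (ii): producing a \emph{global} continuous orthonormal frame of $W_t$ over $[0,1]$. Locally this is a routine Gram--Schmidt orthonormalization of the vectors $P_{W_t} w_j^\tau$, but the global lift genuinely requires the Kato/Sz.-Nagy conjugation trick (or, equivalently, triviality of the Stiefel bundle of $m$-frames over the contractible base $[0,1]$). I should also flag that this argument yields only $L^2$-continuity of $G_t$: the frame controls $G_t$ only on the wandering subspace, so Hilbert--Schmidt bounds on the columns suffice, but promoting the conclusion to the sup-norm continuity of Theorem \ref{thm:main result} requires finer control on $G_t(z)$ away from the wandering subspace, which is what the next section is evidently designed to supply.
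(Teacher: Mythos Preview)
Your argument is correct and takes a genuinely different route from the paper's. You work directly with the wandering subspace: from $P_{W_t}=p_t-Sp_tS^*$ you get norm-continuity of $t\mapsto P_{W_t}$, lift it via the Kato--Sz.-Nagy conjugation to a continuous orthonormal frame $(w_j^t)$ of $W_t$, and read off $G_t$ as the matrix with columns $w_j^t$; the $L^2$ bound then drops out of the Hilbert--Schmidt identity. The paper instead goes through reproducing kernels: the relation $p_t\frac{e_j}{1-\bar\lambda z}=\frac{G_t(z)G_t(\lambda)^*e_j}{1-\bar\lambda z}$ pins down $G_t(\lambda)G_t(\lambda)^*$ in terms of $p_t$, and the author then extracts $G_t(\lambda)$ by a square root (when $m=n$) or by tracking eigenvectors of the positive part (when $m<n$), covers $[0,1]$ by finitely many intervals on which a fixed $\lambda_k$ works, and patches by constant unitary corrections.

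Your approach is cleaner: it treats $m=n$ and $m<n$ uniformly and replaces the somewhat delicate eigenspace-tracking in the paper's Case~2 by a single invocation of the Kato lemma (or, equivalently, triviality of the frame bundle over $[0,1]$). What the paper's route buys is that the reproducing-kernel identity \eqref{eq:lambda} is reused verbatim in the $L^\infty$ section, where the kernel $K_t(z,w)=\frac{1}{1-r\bar z w}G_t(w)G_t(rz)^*$ is the object estimated; so the kernel computation is not wasted effort but preparation for the sup-norm upgrade, whereas your wandering-subspace argument, as you yourself note, gives no direct handle on $G_t(z)$ pointwise.
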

\emph{Proof: }First we consider the case, when the dimension $n$
from the construction of Hardy space $H^{2}(\mathbb{C}^{n})$ is the
same as the dimension of the wandering space $W$.
\begin{caseenv}
\item If $m=n$ we can go to a point $\lambda\in D$ in the unit disk \cite{Bercovici1990}.
We use a reproducing kernel in order to relate projection $p_{t}=\textrm{Proj}_{G_{t}H^{2}}$
to the inner function $G_{t}$. We consider an inner product $\left\langle p_{t}\frac{f}{1-\overline{\lambda}z};\, z^{k}G_{t}(z)u\right\rangle $
where $f\in\mathbb{C}^{n}$ and $u\in\mathbb{C}^{m}$ are arbitrary
vectors. We get\[
\left\langle p_{t}\frac{f}{1-\overline{\lambda}z};\, z^{k}G_{t}(z)u\right\rangle =\left\langle \frac{f}{1-\overline{\lambda}z};\, z^{k}G_{t}(z)u\right\rangle =\left\langle f;\,\lambda^{k}G_{t}(\lambda)u\right\rangle =\]
\[
=\left\langle G_{t}(\lambda)^{*}f;\,\lambda^{k}u\right\rangle =\left\langle \frac{G_{t}(\lambda)^{*}f}{1-\overline{\lambda}z};\, z^{k}u\right\rangle =\left\langle \frac{G_{t}(z)_{t}G(\lambda)^{*}f}{1-\overline{\lambda}z};\, z^{k}G(z)u\right\rangle \]
At this point we can replace $f$ with $e_{1},\ldots e_{n}\in\mathbb{C}^{n}$
to get \begin{equation}
\left(p_{t}\frac{e_{1}}{1-\overline{\lambda}z},\, p_{t}\frac{e_{2}}{1-\overline{\lambda}z}\ldots p_{t}\frac{e_{n}}{1-\overline{\lambda}z}\right)=\frac{G_{t}(z)G_{t}(\lambda)^{*}}{1-\overline{\lambda}z}\label{eq:lambda}\end{equation}
Denote \[
\Lambda_{t}(z):=\left(p_{t}\frac{e_{1}}{1-\overline{\lambda}z},\, p_{t}\frac{e_{2}}{1-\overline{\lambda}z}\ldots p_{t}\frac{e_{n}}{1-\overline{\lambda}z}\right).\]
Taking $z=\lambda$ we get \[
G_{t}(\lambda)G_{t}(\lambda)^{*}=(1-|\lambda|^{2})\Lambda_{t}(\lambda).\]
Since the right hand side is expressed in terms of $p_{t}$, it is
a continuous function in $t\in[0,\,1]$. The left hand side is also
continuous as a function of $\lambda\in D$. Hence $\textrm{rank}(G_{t}(\lambda))=\textrm{rank}(G_{t}(\lambda)G_{t}(\lambda)^{*})$
is lower-semi-continuous in $\lambda$ and $t$. If $G_{t_{0}}(\lambda_{t_{0}})$
has a full rank $n$ for some $\lambda_{t_{0}}$ then by lower-semi-continuity
$G_{t}(\lambda_{t_{0}})$ must also have same rank $n$ for $t$ close
to $t_{0}$. We claim that for every $t\in[0,\,1]$ there exists such
$\lambda_{t}$ where $G_{t}(\lambda_{t})$ has a full rank $n$. For
if no such $\lambda_{t}$ exists for some $t$, we must have $\textrm{det}G_{t}(\lambda)=0$
for every $\lambda\in D$ and therefore $\textrm{det}G_{t}(z)=0$
a.e. for $z\in\mathbb{T}$. This however is impossible since $G_{t}(z)$
is an isometry and $|\textrm{det}G_{t}(z)|=1$ for a.e. $z\in\mathbb{T}$.
The preceding remark and compactness of $[0,\,1]$ produces a collection
of points $\lambda_{1},\lambda_{2}\ldots\lambda_{N}\in D$ and a finite
open cover $\{(a_{k},\, b_{k})\}_{k=1}^{N}$ of interval $[0,\,1]$
such that $G_{t}(\lambda_{k})$ is invertible for $t\in(a_{k},\, b_{k})$
for $k=1,2\ldots N$.\\
We set \[
G_{k,t}(\lambda_{k})=\sqrt{(1-|\lambda_{k}|^{2})\Lambda_{t}(\lambda_{k})}\]
 for $k=1,2\ldots N$ and $t\in(a_{k},\, b_{k})$. Within each interval
$(a_{k},\, b_{k})$ this definition identifies one choice from a family
of inner functions which so far was only defined up to an unitary
rotation by Beurling's theorem. This choice is also continuous, $G_{k,t}(\lambda_{k})$
is continuous in $t$. Having picked $G_{k,t}$ at a single point
$\lambda_{k}\in D$ we consistently pick it from the unitarily equivalent
family on the whole unit circle: $G_{k,t}(z)=(1-\overline{\lambda}_{k}z)\Lambda_{t}(z)G_{t}(\lambda_{k})^{-1}$.
Thus defined $G_{k,t}$ is $L^{2}$ continuous in $t$ for $t\in(a_{k},\, b_{k})$
and $G_{k,t}H^{2}(\mathbb{C}^{n})=p_{t}H^{2}(\mathbb{C}^{n})$.\\
Finally we patch up $G_{k,t}$ functions defined in intervals $t\in(a_{k},\, b_{k})$,\linebreak{}
$k=1,2\ldots N$ into a single $\tilde{G_{t}}$, adjusting
by rotation so that definition on every subsequent interval to be
continuation in $t$ of previous. Starting from $t=0$ we put $\tilde{G_{t}}=G_{0,t},$
for $t\in[a_{0}=0;b_{0}]$. Next, since $b_{0}\in(a_{1};b_{1})$ we
have $G_{1,b_{0}}U_{1}=\tilde{G}_{b_{0}}$, where $U_{1}$ is an unitary
constant. We put $\tilde{G_{t}}=G_{1,t}U_{1}$ for $t\in(b_{0};b_{1}]$.
This extends $\tilde{G}_{t}$ to the next interval continuously. Carrying
on the process in the same manner for all covering intervals $\{(a_{k},\, b_{k})\}_{k=1}^{N}$
we get single inner function $\tilde{G}_{t}$ which is $L^{2}$ continuous
in $t$ for $t\in[0,\,1]$ and produces desired invariant subspaces:
$\tilde{G}_{t}H^{2}(\mathbb{C}^{n})=p_{t}H^{2}(\mathbb{C}^{n})$
\item If $m<n$ the calculation leading to the equation \ref{eq:lambda}
is still valid, and a function $d(\lambda)=\textrm{rank}(G_{t}(\lambda))=\textrm{rank}(G_{t}(\lambda)G_{t}(\lambda)^{*})$
is lower-semi-continuous in $\lambda\in D$. Observe that for $\theta-$a.e.
$\textrm{rank}(G_{t}(e^{i\theta}))=m$. Since $G_{t}(e^{i\theta})$
is a strong limit of $G_{t}(re^{i\theta})$ as $r\rightarrow1$ (and
matrix being of finite dimension), we get that for all $t\in[0,1]$
there exists $\lambda_{t}\in D$ such that $G_{t}(\lambda)$ has rank
$m$. If $G_{t_{0}}(\lambda_{t_{0}})$ has rank $m$ for some $\lambda_{t_{0}}$
then by lower-semi-continuity $G_{t}(\lambda_{t_{0}})$ must also
have same rank $m$ for $t$ close to $t_{0}$. With the same argument
as in the previous case we get a collection of points $\lambda_{1},\lambda_{2}\ldots\lambda_{N}\in D$
and a finite open cover $\{(a_{k},\, b_{k})\}_{k=1}^{N}$ of interval
$[0,\,1]$ such that $G_{t}(\lambda_{k})$ is one-to-one for $t\in(a_{k},\, b_{k})$
for $k=1,2\ldots N$. The only instance where argument of previous
case fails is obtaining $G_{k,t}(\lambda_{k})$ from $G_{k,t}(\lambda_{k})=\sqrt{(1-|\lambda_{k}|^{2})\Lambda_{t}(\lambda_{k})}$,
in present case we have to pick a unique representative $G_{k,t}$
from a family of unitarily equivalent ones (produced by Beurling's
theorem) in a different manner. Provided we succeed in such selection,
the rest of the argument can be finished as before, in $m=n$ case.
Having defined $G_{k,t}$ at a single point $\lambda_{k}\in D$ we
define it accordingly on the whole unit circle: $G_{k,t}(z)=(1-\overline{\lambda}_{k}z)\Lambda_{t}(z)\left(G_{t}(\lambda)_{\big|rang(G^{*})}^{*}\right)^{-1}$
where $\left(G_{t}(\lambda)_{\big|rang(G^{*})}^{*}\right)^{-1}$ is
$n\times m$ matrix acting as inverse of $G_{t}(\lambda)^{*}$ on
the $\textrm{range}(G_{t}^{*})$. Thus defined $G_{k,t}$ is $L^{2}$
continuous in $t$ for $t\in(a_{k},\, b_{k})$ and $G_{k,t}H^{2}(\mathbb{C}^{m})=p_{t}H^{2}(\mathbb{C}^{n})$.
Patching between intervals $(a_{k},\, b_{k})$ for $k=1,2\ldots N$
is again done similar to preceding case.\\
In order to fix unitary rotation we look at operator $A_{t}(\lambda)=\sqrt{(1-|\lambda|^{2})\Lambda_{t}(\lambda)}$
and get $G_{t}(\lambda)G_{t}(\lambda)^{*}=A_{t}^{2}$, that is $G_{t}(\lambda)^{*}=U_{t}A_{t}$
where $U_{t}$ is a surjective partial isometry. Fixing $G_{t}$ is
equivalent to fixing $U_{t}$ which is equivalent to picking an orthonormal
basis for range -$\ker^{\perp}$ space of the positive matrix $A_{t}$.
\\
If all (nonzero) eigenvalues $\mu_{1}(t)>\mu_{2}(t)>\ldots\mu_{m}(t)$
of $A_{t}$ are distinct, then we can pick an orthonormal basis $\{v_{1}(t),\, v_{2}(t)\ldots v_{m}(t)\}$
of eigenvectors according to the descending order of eigenvalues.
If eigenvalues $\{\mu_{k}(t)\}_{t=1}^{m}$ of $A_{t}$ remain distinct
for all $t\in[0,1]$ then for all $t$ we have an orthonormal basis
$\{v_{k}(t)\}_{t=1}^{m}$ of range -$\ker^{\perp}$ space of the positive
matrix $A_{t}$, this basis is continuous in $t$ and proposition
is proved.\\
For illustration how to relax this last restriction lets assume
that eigenvalues of $A_{t}$ are all distinct except on interval $t\in[\tau_{1},\,\tau_{2}]$
where certain eigenvalue $\mu_{i}(t)=\mu_{j}(t)$ has multiplicity
$2$. Then we keep track of {}``entry'' $\mu_{i}(\tau_{1})$, $\mu_{j}(\tau_{1})$
and {}``exit'' $\mu_{i}(\tau_{2})$, $\mu_{j}(\tau_{2})$ directions.
We know two vectors entering into two dimensional eigenspace $\textrm{Span}\{\mu_{i}(\tau_{1}),\mu_{j}(\tau_{1})\}$
at $t=\tau_{1}$ and exiting from $\textrm{Span}\{\mu_{i}(\tau_{2}),\mu_{j}(\tau_{2})\}$
at $t=\tau_{2}$. In between of two endpoints we pick linear interpolation
(or any $t$-continuous transition) between entry and exit directions
(keeping two dimensional plane $\textrm{Span}\{\mu_{i}(t),\mu_{j}(t)\}$
under the same orientation). Thus we preserve continuity of $U_{t}$.
If it happens that $\tau_{1}=0$ we are at liberty to pick any two
orthogonal directions in eigenspace, since we are not limited with
left-continuity at $t=0$ (respectively right-continuity if $\tau_{2}=1$).
In this way any combination of multiplicity can be handled. By handling
new arrivals and departures to eigenspace in some consistent way (one
choice could be last in, first out) continuity of $U_{t}$ will be
maintained. $\Box$
\end{caseenv}

\section{$L^{\infty}$ continuity}

Using results of previous section we choose $G_{t}$ to be $L^{2}$
continuous for $t\in[0,\,1]$ and we argue that in fact we have $L^{\infty}$
continuity. We go inside the disk at some point $(rz)$ and use reproducing
kernel for evaluation at that point. We fix some $r\in(0,1)$ and
kernel $k_{z}(w)=\frac{1}{1-r\bar{z}w}$.

For all $f\in H_{\mathbb{C}^{n}}^{2}$\begin{eqnarray*}
(M_{G}^{*}f)(rz) & = & \left\langle M_{G}^{*}f;k_{z}e_{1}\right\rangle e_{1}+\left\langle M_{G}^{*}f;k_{z}e_{2}\right\rangle e_{2}+\ldots+\left\langle M_{G}^{*}f;k_{z}e_{m}\right\rangle e_{m}\\
 & = & \left\langle f;M_{G}(k_{z}e_{1})\right\rangle e_{1}+\left\langle f;M_{G}(k_{z}e_{2})\right\rangle e_{2}+\ldots+\left\langle f;M_{G}(k_{z}e_{m})\right\rangle e_{m}\end{eqnarray*}

\newpage{}\begin{eqnarray*}
 & = & \left\langle f;k_{z}g_{\cdot1}\right\rangle e_{1}+\left\langle f;k_{z}g_{\cdot2}\right\rangle e_{2}+\ldots+\left\langle f;k_{z}g_{\cdot m}\right\rangle e_{m}\\
 & = & \left(\begin{array}{c}
\int f\cdot\overline{k_{z}g_{\cdot1}}\\
\int f\cdot\overline{k_{z}g_{\cdot2}}\\
\cdots\\
\int f\cdot\overline{k_{z}g_{\cdot m}}\end{array}\right)=\int_{\mathbb{T}}\overline{k_{z}(w)}G(w)^{*}f(w)dw\end{eqnarray*}

Respectively \[
(M_{G}M_{G}^{*}f)(rz)=\int_{\mathbb{T}}\overline{k_{z}(w)}G(rz)G(w)^{*}f(w)dw=\int_{\mathbb{T}}K^{*}(z,w)f(w)dw\]
 where $K_{t}(z,w):=\frac{1}{1-r\bar{z}w}G_{t}(w)G_{t}(rz)^{*}$.
We fix some $s,t\in[0,1]$ and put\linebreak{}
$F(z,w)=K_{t}(z,w)-K_{s}(z,w)$, for $(z,w)\in\mathbb{T}\times\mathbb{T}$. 
\begin{prop}
Consider function $K_{t}(z,w)=\frac{1}{1-r\bar{z}w}G_{t}(w)G_{t}(rz)^{*}$
defined for $(z,w)\in\mathbb{T}\times\mathbb{T}$ where $G_{t}$ is
an inner function. For $0<r<1$ consider\linebreak{}
$F(z,w)=K_{t}(z,w)-K_{s}(z,w)$. Let $\|F\|_{\infty}$
be essential supremum taken against planar measure over $(z,w)\in\mathbb{T}\times\mathbb{T}$.
Given $z$ let $\|F(z,\cdot)\|_{\infty}$ be $w$-pointwise essential
supremum of matrix norm of $F(z,w)$.

Then $\|F\|_{\infty}=\sup_{z\in\mathbb{T}}\|F(z,\cdot)\|_{\infty}$ \end{prop}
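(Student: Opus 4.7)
\emph{Approach.} The plan is to exploit the fact that $r<1$ makes $z\mapsto F(z,\cdot)$ Lipschitz continuous in operator norm with a constant uniform in $w$; this regularity is exactly what is needed to replace the iterated essential supremum in $z$ by an actual supremum. The easy inequality $\|F\|_\infty\le\sup_{z\in\mathbb{T}}\|F(z,\cdot)\|_\infty$ follows immediately from Tonelli: for each fixed $z$ the set $\{w:\|F(z,w)\|>\|F(z,\cdot)\|_\infty\}$ has $w$-measure zero, so the corresponding set in $\mathbb{T}\times\mathbb{T}$ has planar measure zero.

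For the nontrivial inequality I would first establish the uniform estimate
\[
\|F(z,w)-F(z',w)\|\le L\,|z-z'|\qquad(z,z'\in\mathbb{T},\ \text{a.e.\ }w\in\mathbb{T}),
\]
with $L$ depending only on $r$ (and on the essential suprema of $G_{t},G_{s}$, which are $1$ by innerness). This reduces to three uniform bounds: (i) since $|1-r\bar z w|\ge 1-r$, the scalar factor $(1-r\bar z w)^{-1}$ has $\theta$-derivative at most $r/(1-r)^{2}$ in $z=e^{i\theta}$, uniformly in $w\in\mathbb{T}$; (ii) because $G_{t}$ and $G_{s}$ are bounded by one on $D$, Cauchy estimates give uniform bounds on $\|G_{t}'(\zeta)\|$ and $\|G_{s}'(\zeta)\|$ for $|\zeta|\le r$, so $z\mapsto G_{t}(rz)^{*}$ and $z\mapsto G_{s}(rz)^{*}$ are Lipschitz on $\mathbb{T}$; (iii) the a.e.\ bounds $\|G_{t}(w)\|,\|G_{s}(w)\|\le 1$ from innerness let the product-rule expansion of $F$ close up with a constant independent of $w$.

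With this estimate in hand I would finish by contradiction. Set $M:=\|F\|_{\infty}$ and $N:=\sup_{z}\|F(z,\cdot)\|_{\infty}$ and suppose $N>M$. Choose $z_{0}\in\mathbb{T}$ with $\|F(z_{0},\cdot)\|_{\infty}>(M+N)/2$, and then a set $E\subseteq\mathbb{T}$ of positive $w$-measure on which $\|F(z_{0},w)\|>(M+N)/2$. Pick $\delta>0$ with $L\delta<(N-M)/4$; then for every $z$ in the arc $\{|z-z_{0}|<\delta\}$ and every $w\in E$ the Lipschitz bound gives
\[
\|F(z,w)\|\ge\|F(z_{0},w)\|-L|z-z_{0}|>\frac{3M+N}{4}>M,
\]
producing a set of positive planar measure on which $\|F\|>M$, a contradiction. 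The only substantive step is the uniform Lipschitz estimate—this is the sole place where $r<1$ enters essentially, and everything else is soft measure theory built on top of it.
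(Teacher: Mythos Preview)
Your proposal is correct and follows essentially the same route as the paper: both arguments treat $\|F\|_{\infty}\le\sup_{z}\|F(z,\cdot)\|_{\infty}$ as immediate from Fubini/Tonelli on sections, and for the reverse inequality both exploit that $rz$ lies strictly inside the disk to get uniform (in $w$) continuity of $z\mapsto F(z,w)$, then thicken a $w$-section near an approximate maximizer $z_{0}$ to a set of positive planar measure. The only differences are cosmetic---you quantify the regularity as a Lipschitz bound via Cauchy estimates and phrase the conclusion as a contradiction, whereas the paper writes out $\|F(z,w)-F(z',w)\|$ directly and concludes with an $\epsilon$-argument.
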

\begin{proof}
The proof in higher dimension is similar to a scalar case. One direction
is straightforward. If $\|F(z,w)\|_{B(\mathbb{C}^{n})}>\|F\|_{\infty}-\epsilon$
on some set $A$ of positive planar measure, then the same is true
on some one dimensional section $A_{z_{0}}$ of positive measure,
that is $\|F(z_{0},\cdot)\|_{\infty}>\|F\|_{\infty}-\epsilon$ , giving
$\sup_{z\in\mathbb{T}}\|F(z,\cdot)\|_{\infty}>\|F\|_{\infty}$.

On the other hand \begin{eqnarray*}
\|F(z,w)-F(z',w)\|_{B(\mathbb{C}^{n})} & = & \left\Vert \frac{1}{1-r\bar{z}w}\big(G_{t}(w)G_{t}(rz)^{*}-G_{s}(w)G_{s}(rz)^{*}\big)-\right.\\
 &  & \left.-\frac{1}{1-r\bar{z}'w}\big(G_{t}(w)G_{t}(rz')^{*}-G_{s}(w)G_{s}(rz')^{*}\big)\right\Vert \end{eqnarray*}

\newpage{}\begin{eqnarray*}
 & \leq & \frac{r|z-z'|}{(1-r)^{2}}\left\Vert G_{t}(w)G_{t}(rz')^{*}-G_{s}(w)G_{s}(rz')^{*}\right\Vert +\\
 &  & +\frac{1}{1-r}\left\Vert G_{t}(w)\big(G_{t}(rz)^{*}-G_{t}(rz')^{*}\big)-G_{s}(w)\big(G_{s}(rz)^{*}-G_{s}(rz')^{*}\big)\right\Vert \\
 & \leq & \frac{2r}{(1-r)^{2}}|z-z'|\\
 &  & +\frac{1}{1-r}\left(\|G_{t}(rz)^{*}-G_{t}(rz')^{*}\|_{B(\mathbb{C}^{n})}+\|G_{s}(rz)^{*}-G_{s}(rz')^{*}\|_{B(\mathbb{C}^{n})}\right)\end{eqnarray*}

and since $rz$ is inside the unit disk, $F(z,w)$ is uniformly (with
regard to $w$) continuous in $z$. Then $\|F(z,\cdot)\|_{\infty}$
is continuous in $z$. If there is a significant one dimensional section
where supremum is approached in $w$, then that value will not be
changed much while perturbing $z$ variable. Thus there is a set of
positive planar measure on $\mathbb{T}\times\mathbb{T}$ where $\|F\|_{\infty}>\sup_{z\in\mathbb{T}}\|F(z,\cdot)\|_{\infty}-\epsilon$\end{proof}
\begin{prop}
Let $p_{t}$ and $p_{s}$ be orthogonal projections in $H^{2}(\mathbb{C}^{n})$
on invariant subspaces of a unilateral shift of multiplicity $n$.
Let $G_{t}$ and $G_{s}$ be an inner function from $H^{2}(\mathbb{C}^{m})$
to $H^{2}(\mathbb{C}^{n})$ such that \begin{eqnarray*}
p_{t}H^{2}(\mathbb{C}^{n}) & = & G_{t}H^{2}(\mathbb{C}^{m})\\
p_{s}H^{2}(\mathbb{C}^{n}) & = & G_{s}H^{2}(\mathbb{C}^{m})\end{eqnarray*}
 For $0<r<1$ consider function $K_{t}(z,w)=\frac{1}{1-r\bar{z}w}G_{t}(w)G_{t}(rz)^{*}$
defined for $(z,w)$ in $\mathbb{T}\times\mathbb{T}$. Let $F(z,w)=K_{t}(z,w)-K_{s}(z,w)$.
Let $\|F\|_{\infty}$ be essential supremum taken against planar measure
over $(z,w)\in\mathbb{T}\times\mathbb{T}$. 

Then $\|F^{*}\|_{\infty}\leq\sqrt{\frac{n}{1-r^{2}}}\|p_{t}-p_{s}\|$\end{prop}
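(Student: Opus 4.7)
The plan is to invoke the preceding proposition to reduce to a sliced essential supremum in $z_{0}$, express each column of $F(z_{0},\cdot)$ as the boundary value of a projection-difference via the reproducing-kernel identity of Section~2, derive an integrated Hilbert--Schmidt bound, and then upgrade to a pointwise operator-norm bound using the inner-function factorization.

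The first step applies the preceding proposition with $F^{*}$ in place of $F$: since the matrix operator norm agrees with that of its adjoint, $\|F^{*}\|_{\infty}=\|F\|_{\infty}=\sup_{z_{0}\in\mathbb{T}}\|F(z_{0},\cdot)\|_{\infty}$. It therefore suffices, for each fixed $z_{0}\in\mathbb{T}$, to establish $\|F(z_{0},\cdot)\|_{\infty}\leq\sqrt{n/(1-r^{2})}\,\|p_{t}-p_{s}\|$.

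The second step identifies each column with a projection-difference of a reproducing-kernel element. The calculation leading to equation~\ref{eq:lambda} gives $(p_{t}(k_{z_{0}}u))(w)=K_{t}(z_{0},w)u$ for $u\in\mathbb{C}^{n}$ and a.e.\ $w\in\mathbb{T}$, so
\[
F(z_{0},w)u \;=\; \bigl((p_{t}-p_{s})(k_{z_{0}}u)\bigr)(w)\quad\textrm{a.e.}\ w\in\mathbb{T}.
\]
Combined with $\|k_{z_{0}}u\|_{H^{2}}=\|u\|/\sqrt{1-r^{2}}$ and the operator bound on $p_{t}-p_{s}$, this yields $\|F(z_{0},\cdot)u\|_{L^{2}(\mathbb{T})}\leq\|u\|\,\|p_{t}-p_{s}\|/\sqrt{1-r^{2}}$. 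Applying the bound to $u=e_{1},\ldots,e_{n}$ and summing (Fubini) gives the integrated Hilbert--Schmidt estimate
\[
\int_{\mathbb{T}}\|F(z_{0},w)\|_{\mathrm{HS}}^{2}\,d\mu(w)\;\leq\;\frac{n\,\|p_{t}-p_{s}\|^{2}}{1-r^{2}}.
\]

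The main obstacle is to promote this integrated estimate to a pointwise essential supremum: an arbitrary $H^{2}$ function has no such pointwise boundary control from its $L^{2}$ norm. The promotion exploits the explicit factorization $F(z_{0},w)=k_{z_{0}}(w)[G_{t}(w)G_{t}(rz_{0})^{*}-G_{s}(w)G_{s}(rz_{0})^{*}]$ together with the inner-function boundary identity $\|G_{t}(w)v\|_{\mathbb{C}^{n}}=\|v\|_{\mathbb{C}^{m}}$ for a.e.\ $w\in\mathbb{T}$, which jointly constrain how $\|F(z_{0},w)\|_{\mathrm{HS}}^{2}$ may fluctuate in $w$ and yield the pointwise target $\|F(z_{0},w)\|_{\mathrm{HS}}^{2}\leq n\,\|p_{t}-p_{s}\|^{2}/(1-r^{2})$. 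Applying $\|F(z_{0},w)\|_{\mathrm{op}}\leq\|F(z_{0},w)\|_{\mathrm{HS}}$, taking the essential supremum in $w$, and combining with the sliced-norm identity from the preceding proposition delivers the asserted bound.
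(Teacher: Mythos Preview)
Your first four steps are correct and clean: the identity $F(z_{0},w)u=((p_{t}-p_{s})(k_{z_{0}}u))(w)$ is exactly equation~\ref{eq:lambda}, and it immediately gives the integrated Hilbert--Schmidt bound $\int_{\mathbb{T}}\|F(z_{0},w)\|_{\mathrm{HS}}^{2}\,d\mu(w)\leq n\|p_{t}-p_{s}\|^{2}/(1-r^{2})$.

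The gap is step~5. You assert that the factorization $F(z_{0},w)=k_{z_{0}}(w)\bigl[G_{t}(w)A-G_{s}(w)B\bigr]$ (with $A=G_{t}(rz_{0})^{*}$, $B=G_{s}(rz_{0})^{*}$) together with $G_{t}(w)^{*}G_{t}(w)=I_{m}$ a.e.\ ``constrain how $\|F(z_{0},w)\|_{\mathrm{HS}}^{2}$ may fluctuate'' enough to force the pointwise bound. But expanding gives
\[
\|F(z_{0},w)\|_{\mathrm{HS}}^{2}=|k_{z_{0}}(w)|^{2}\Bigl(\|A\|_{\mathrm{HS}}^{2}+\|B\|_{\mathrm{HS}}^{2}-2\,\mathrm{Re}\,\mathrm{tr}\bigl(A^{*}G_{t}(w)^{*}G_{s}(w)B\bigr)\Bigr),
\]
and the cross term $G_{t}(w)^{*}G_{s}(w)$ genuinely varies with $w$ (already in the scalar case $m=n=1$ it is the inner function $\overline{G_{t}}G_{s}$). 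Nothing in the isometry identity bounds this variation, so there is no mechanism here to pass from the $L^{2}$ average to the essential supremum. You have restated the target rather than proved it; this is precisely the step where the $L^{2}$ information from the reproducing-kernel identity is too weak on its own.

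The paper circumvents this by working on the \emph{dual} side of the kernel. Rather than applying $p_{t}-p_{s}$ to $k_{z_{0}}u$ and measuring the output in $L^{2}$, it uses the representation $((p_{t}-p_{s})f)(rz_{0})=\int_{\mathbb{T}}F^{*}(z_{0},w)f(w)\,dw$ and feeds in a localized test function $f=P_{H^{2}}\bigl(m(\tilde E)^{-1}\chi_{\tilde E}\,c\bigr)$ concentrated near a point $w_{0}$ where $\|F^{*}(z_{0},w_{0})\|$ is almost maximal (found via Lusin's theorem). The integral then approximates $F^{*}(z_{0},w_{0})c$, giving a lower bound close to $\|F^{*}\|_{\infty}$, while point evaluation at $rz_{0}$ via the reproducing kernel gives the upper bound $\sqrt{n/(1-r^{2})}\,\|p_{t}-p_{s}\|$. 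The localization is what produces pointwise control; your approach lacks any such device.
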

\begin{proof}
Let $\epsilon>0$. Let $z_{0}\in\mathbb{T}$ be such that $\|F(z_{0},\cdot)\|_{\infty}=\|F\|_{\infty}=:M$.
Let $E_{\epsilon}\subset\mathbb{T}$ be such that $m(E_{\epsilon})>0$
and $\|F(z_{0},w)\|_{B(\mathbb{C}^{n})}>\|F\|_{\infty}-\epsilon$
for $w\in E_{\epsilon}$. By $z$-continuity there exists $B_{\epsilon}(z_{0})\subset\mathbb{T}$
such that \[
\|F(z_{0},w)_{(z,w)\in B_{\epsilon}(z_{0})\times E_{\epsilon}}\|_{B(\mathbb{C}^{n})}>\|F\|_{\infty}-2\epsilon.\]
 By Lusin's theorem\emph{ }there exists compact $L\subseteq\mathbb{T}\times\mathbb{T}$
and continuous function $\tilde{F}$ such that $m(L^{c})<\frac{m(B_{\epsilon}(z_{0})\times E_{\epsilon})}{4}$
and $\tilde{F}_{\big|L}=F_{\big|L}$. Let $\delta>0$ such that when
$|w_{1}-w_{2}|<\delta$ we have $\|\tilde{F}(z_{0},w_{1})-\tilde{F}(z_{0},w_{2})\|_{B(\mathbb{C}^{n})}<\epsilon$.
Since $L^{c}$ has a very small measure $L\cap(B_{\epsilon}(z_{0})\times E_{\epsilon})$
has a positive planar measure and hence has a section $\tilde{E}_{z_{0}}$
with a positive measure. In addition $\tilde{E}_{z_{0}}$ can be chosen
so that its diameter $d$ is $d<\delta$. Let $c=(c_{1},c_{2}\ldots c_{n})\in\mathbb{C}^{n}$,
$\|c\|=1$ and $w_{0}\in\tilde{E}_{z_{0}}$ be such that $\left\Vert F^{*}(z_{0},w_{0})c^{\top}\right\Vert _{\mathbb{C}^{n}}\geq\|F^{*}\|_{\infty}-2\epsilon$.
Then\[
\left\Vert (p_{t}-p_{s})\left[P_{H_{\mathbb{C}^{n}}^{2}}\frac{\chi_{\tilde{E}_{z_{0}}}}{m(\tilde{E}_{z_{0}})}c^{\top}\right](rz_{0})\right\Vert _{\mathbb{C}^{n}}\geq\]
\[
\geq\left\Vert \frac{1}{m(\tilde{E}_{z_{0}})}\int_{\tilde{E}_{z_{0}}}F^{*}c^{\top}dw\right\Vert _{\mathbb{C}^{n}}-\left\Vert \frac{1}{m(\tilde{E}_{z_{0}})}\int_{\tilde{E}_{z_{0}}}F^{*}P_{H_{\mathbb{C}^{n}}^{2}}^{\perp}c^{\top}dw\right\Vert _{\mathbb{C}^{n}}\geq\]
\[
\geq\left\Vert F^{*}(z_{0},w_{0})c^{\top}\right\Vert _{\mathbb{C}^{n}}-\left\Vert \frac{1}{m(\tilde{E}_{z_{0}})}\int_{\tilde{E}_{z_{0}}}\big(F^{*}(z_{0},w)-F^{*}(z_{0},w_{0})\big)c^{\top}dw\right\Vert _{\mathbb{C}^{n}}\geq\]
\[
\geq\|F^{*}\|_{\infty}-2\epsilon-\sup_{w\in\tilde{E}_{z_{0}}}\|\tilde{F}(z_{0},w)-\tilde{F}(z_{0},w_{0})\|_{B(\mathbb{C}^{n})}\geq\|F\|_{\infty}-3\epsilon\]
\\
On the other hand, let $k:=\left(k_{z},k_{z}\ldots k_{z}\right)^{\perp}$\[
\left\Vert (p_{t}-p_{s})\left[P_{H_{\mathbb{C}^{n}}^{2}}\frac{\chi_{\tilde{E}}}{m(\tilde{E})}c^{\top}\right](rz_{0})\right\Vert _{\mathbb{C}^{n}}=\]
\[
\left\Vert \int_{\mathbb{T}}\left\langle k;(p_{t}-p_{s})\left[P_{H_{\mathbb{C}^{n}}^{2}}\frac{\chi_{\tilde{E}}}{m(\tilde{E})}c^{\top}\right]\right\rangle dw\right\Vert _{\mathbb{C}^{n}}\leq\]
\[
\left\Vert k\right\Vert _{H_{\mathbb{C}^{n}}^{2}}\cdot\left\Vert (p_{t}-p_{s})\left[P_{H_{\mathbb{C}^{n}}^{2}}\frac{\chi_{\tilde{E}}}{m(\tilde{E})}c^{\top}\right]\right\Vert _{H_{\mathbb{C}^{n}}^{2}}\leq\sqrt{\frac{n}{1-r^{2}}}\cdot\|p_{t}-p_{s}\|\]

\end{proof}

\subsection{Main result}

At this point we are ready to go back to the main result, theorem
\emph{\ref{thm:main result}}, which we copy for convenience.
\begin{thm*}
Let $H^{2}(\mathbb{C}^{n})$ be Hardy space for some finite $n$ and
let $\{p_{t}\}_{t\in[0,1]}$ be a family of orthogonal projections
such that:

1. Family is continuous in norm, $\lim_{t\rightarrow\tau}\|p_{t}-p_{\tau}\|=0$;

2. Each projection $p_{t}H^{2}(\mathbb{C}^{n})$ produces an invariant
subspace of a unilateral shift operator (of multiplicity $n$). 

Then there exists integer $m\leq n$ and a choice of a family of inner
functions $G_{t}\in H^{\infty}(\mathbb{T},B(\mathbb{C}^{m},\,\mathbb{C}^{n}))$
for $t\in[0,1]$ such that:

1. $p_{t}H_{\mathbb{C}^{n}}^{2}=G_{t}H_{\mathbb{C}^{m}}^{2}$; and

2. $\{G_{t}\}_{t\in[0,1]}$ can be chosen to be sup-norm continuous,\linebreak{}
$\textrm{esssup}_{\theta}\|G_{t}(e^{i\theta})-G_{\tau}(e^{i\theta})\|_{B(\mathbb{C}^{m};\mathbb{C}^{n})}\stackrel{t\rightarrow\tau}{=}0$\end{thm*}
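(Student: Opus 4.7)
The plan is to promote the $L^2$-continuous family $\{G_t\}$ built in Section 2 into a sup-norm continuous family by inverting the reproducing-kernel identity $(1-r\bar z w)K_t(z,w) = G_t(w)G_t(rz)^*$ pointwise in $w$. Propositions 3 and 4 above give $\|F\|_\infty = \|F^*\|_\infty \leq \sqrt{n/(1-r^2)}\,\|p_t - p_s\|$ for $F = K_t - K_s$, so if we can right-invert $G_t(rz_0)^*$ at a suitable $rz_0 \in D$ with a uniform bound, the sup-norm difference $\|G_t - G_s\|_\infty$ will be controlled by $\|p_t - p_s\|$ plus a term that tends to zero because evaluation inside the disk is already norm-continuous in $t$.

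The first step is to localize in $t$. Fix $s \in [0,1]$. The rank argument of Section 2 (Case 2) furnishes a point $\lambda_s = r_s z_s \in D$ with $z_s \in \mathbb{T}$ at which $G_s(\lambda_s)$ has full rank $m$. Evaluation at an interior point of $D$ is a bounded linear functional on $H^2$, so $L^2$-continuity of $t \mapsto G_t$ implies matrix-norm continuity of $t \mapsto G_t(\lambda_s)$. Hence the smallest singular value of $G_t(\lambda_s)$ stays bounded below on some open neighborhood $(a_s,b_s) \ni s$, and the right inverse $A_t^+ := G_t(\lambda_s)\bigl(G_t(\lambda_s)^* G_t(\lambda_s)\bigr)^{-1}$ of $G_t(\lambda_s)^*$ has $\|A_t^+\|$ uniformly bounded on $(a_s,b_s)$.

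The second step is the algebraic inversion. Writing
\[
(1-r_s\bar z_s w)F(z_s,w) = \bigl(G_t(w)-G_s(w)\bigr)G_t(\lambda_s)^* + G_s(w)\bigl(G_t(\lambda_s)^* - G_s(\lambda_s)^*\bigr)
\]
and multiplying on the right by $A_t^+$ yields
\[
G_t(w) - G_s(w) = \bigl[(1-r_s\bar z_s w)F(z_s,w) - G_s(w)\bigl(G_t(\lambda_s)^* - G_s(\lambda_s)^*\bigr)\bigr] A_t^+.
\]
Taking the essential supremum over $w \in \mathbb{T}$ and using $\|G_s(w)\|=1$ a.e., Proposition 4, and the matrix-norm continuity of $t \mapsto G_t(\lambda_s)$, both terms in the bracket tend to zero as $t \to s$; combined with the uniform bound on $\|A_t^+\|$ this yields $\|G_t - G_s\|_\infty \to 0$ for $t, s \in (a_s,b_s)$.

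Finally, by compactness of $[0,1]$ we cover it with finitely many such neighborhoods and use the local estimates to obtain global sup-norm continuity; no rotations need to be reconciled at overlaps because $G_t$ has already been fixed globally in Section 2. The main obstacle is producing the uniform pseudoinverse bound on $A_t^+$: it is what converts a pointwise full-rank condition at $\lambda_s$ into a quantitative estimate valid on an open neighborhood, and it relies on the fact that $L^2$-continuity of the family $\{G_t\}$ upgrades to norm-continuity of matrix-valued evaluation at any prescribed interior point of $D$.
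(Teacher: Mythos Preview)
Your proposal is correct and follows essentially the same route as the paper: both fix $s$, choose an interior point $\lambda_s$ where $G_s(\lambda_s)$ has full rank $m$, use the $L^2$-continuity of Section~2 to propagate a uniform lower singular-value bound to nearby $t$, and then combine the kernel estimate $\|F^*\|_\infty\le\sqrt{n/(1-r^2)}\,\|p_t-p_s\|$ with the splitting $G_t(w)G_t(\lambda_s)^*-G_s(w)G_s(\lambda_s)^*=(G_t(w)-G_s(w))G_t(\lambda_s)^*+G_s(w)(G_t(\lambda_s)^*-G_s(\lambda_s)^*)$ to isolate $\|G_t-G_s\|_\infty$. The only cosmetic differences are that you write the Moore--Penrose right inverse $A_t^+$ explicitly where the paper uses the equivalent lower bound $\|G_t(\lambda_s)x\|\ge\eta\|x\|$, and your final compactness cover is superfluous since pointwise continuity in $t$ is already the conclusion.
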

\begin{proof}
We pick arbitrary $s\in[0,1]$. Inner function $G_{s}(e^{i\theta})$
is an isometry on the circle for $\theta$ a.e., it is an almost everywhere
strong limit of $G_{s}(re^{i\theta})$ as $r\rightarrow1$. Since
domain has finite dimension $m<\infty$, there must exist $\eta>0$
and a point $r_{\eta}e^{i\theta_{\eta}}\in D$ inside the disk such
that $\ker(G_{s}(r_{\eta}e^{i\theta_{\eta}}))=0$ and we have $\left\Vert G_{s}(r_{\eta}e^{i\theta_{\eta}})x\right\Vert \geq\eta$
for all $x\in\mathbb{C}^{m}$, $\|x\|=1$, the norm of a constant
matrix $G_{s}(r_{\eta}e^{i\theta_{\eta}})$ is bounded from below.
$L^{2}$ norm continuity, established in section 2 gives us uniform
continuity of $G_{t}(r_{\eta}e^{i\theta_{\eta}})$ in $t$ inside
the unit disk for any radius $r<1$. That is, lower bound $\eta$
over the norm of $G_{t}(r_{\eta}e^{i\theta_{\eta}})$ will hold in
some $t$-neighborhood of $s$. With this we have

\begin{eqnarray*}
 & \sqrt{\frac{n}{1-r_{\eta}^{2}}}\|p_{t}-p_{s}\| & \geq\|F^{*}\|_{\infty}\\
 & = & \sup_{z\in\mathbb{T}}\sup_{w\in\mathbb{T}}\left\Vert \frac{1}{1-r_{\eta}\bar{z}w}\big(G_{t}(r_{\eta}z)G_{t}(w)^{*}-G_{s}(r_{\eta}z)G_{s}(w)^{*}\big)\right\Vert _{B(\mathbb{C}^{n})}\\
 & \geq & \frac{1}{1+r_{\eta}}\sup_{w\in\mathbb{T}}\left\Vert \big(G_{t}(r_{\eta}e^{i\theta_{\eta}})G_{t}(w)^{*}-G_{s}(r_{\eta}e^{i\theta_{\eta}})G_{s}(w)^{*}\big)\right\Vert _{B(\mathbb{C}^{n})}\\
 & \geq & \frac{1}{1+r_{\eta}}\sup_{w\in\mathbb{T}}\left\Vert G_{t}(r_{\eta}e^{i\theta_{\eta}})\big(G_{t}(w)^{*}-G_{s}(w)^{*}\big)\right\Vert _{B(\mathbb{C}^{n})}\\
 &  & -\frac{1}{1+r_{\eta}}\sup_{w\in\mathbb{T}}\left\Vert \big(G_{t}(r_{\eta}e^{i\theta_{\eta}})-G_{s}(r_{\eta}e^{i\theta_{\eta}})\big)G_{s}(w)^{*}\right\Vert _{\infty,w\in\mathbb{T}}\\
 & \geq & \frac{\eta}{1+r_{\eta}}\sup_{w\in\mathbb{T}}\left\Vert G_{t}(w)^{*}-G_{s}(w)^{*}\right\Vert _{B(\mathbb{C}^{n})}\\
 &  & -\frac{1}{1+r_{\eta}}\left\Vert G_{t}(r_{\eta}e^{i\theta_{\eta}})-G_{s}(r_{\eta}e^{i\theta_{\eta}})\right\Vert _{B(\mathbb{C}^{n})}\end{eqnarray*}
\linebreak{}
In the last expression the second term $\left\Vert G_{t}(r_{\eta}e^{i\theta_{\eta}})-G_{s}(r_{\eta}e^{i\theta_{\eta}})\right\Vert _{\infty,w\in\mathbb{T}}$
is inside the disk, where we have uniform convergence for $r<1$,
eventually it is negligible. With the remaining first term we get
the desired control from norm continuity of $p_{t}$ to sup norm continuity
of $G_{t}$, $\textrm{esssup}_{w\in\mathbb{T}}\left\Vert G_{t}(w)^{*}-G_{s}(w)^{*}\right\Vert _{B(\mathbb{C}^{n})}\stackrel{t\rightarrow s}{\longrightarrow}0$. 
\end{proof}
\newpage{}\bibliographystyle{amsplain}
\bibliography{math}

\newpage{}

\thispagestyle{empty}Giorgi Shonia

\address{Ohio University, Lancaster}

\address{1570 Granville Pike}

\address{Lancaster, OH 43130-1037}

\email{shonia@ohio.edu}

(740) 654-6711x214
\end{document}